\documentclass[a4paper.12pt]{article}

\usepackage[dvips]{color}
\usepackage{graphicx}
\usepackage{amssymb,amsmath,amsthm}
\usepackage{xy}
\usepackage{wrapfig}
\usepackage{dsfont}
\usepackage{epsf}
\input xy
\xyoption{all}

\usepackage{latexsym, amsfonts, amssymb, txfonts, pxfonts, wasysym}
\usepackage{latexsym}
\usepackage{algorithm}
\usepackage{caption}


\theoremstyle{remark}

\theoremstyle{theorem}

\newtheorem{theorem}[subsection]{\hspace{7 mm}Theorem}

\renewcommand\proofname[1]{\hspace{7 mm}Proof}

\captionsetup[figure]{labelformat=empty,labelsep=none}

\title{A GEOMETRIC WAY TO GENERATE BLUNDON TYPE INEQUALITIES}

\author{Dorin Andrica, Catalin Barbu and Nicusor Minculete}

\date{}

\begin{document}

\maketitle

\begin{abstract}
We present a geometric way to generate Blundon type inequalities.
Theorem 3.1 gives the formula for $\cos \widehat{POQ}$ in terms of
the barycentric coordinates of the points $P$ and $Q$ with respect
to a given triangle. This formula implies Blundon type inequalities
generated by the points $P$ and $Q$ (Theorem 3.2). Some applications
are given in the last section by choosing special points $P$ and
$Q$.

\hspace{0 mm} \textbf{2000 Mathematics Subject Classification}:
26D05; 26D15; 51N35.

\textbf{Key words}: Blundon's inequalities; dual Blundon's
inequalities; barycentric coordinates.
\end{abstract}

\section{Introduction}

Consider $O$ the circumcenter, $I$ the incenter , $ G $ the
centroid, $N$ the Nagel point, $s$ the semiperimeter, $R$ the
circumradius, and $r$ the inradius of triangle $ABC$.

Blundon's inequalities express the necessary and sufficient
conditions for the existence of a triangle with elements $s$, $R$
and $r$:

\begin{equation}
2R^{2}+10Rr-r^{2}-2(R-2r)\sqrt{R^{2}-2Rr}\leq s^{2}\leq
2R^{2}+10Rr-r^{2}+2(R-2r)\sqrt{R^{2}-2Rr}. \tag{1}
\end{equation}

Clearly these two inequalities can be written in the following
equivalent form

\begin{equation}
|s^{2} -2R^{2}-10Rr+r^{2}| \leq 2(R-2r)\sqrt{R^{2}-2Rr}, \tag{2}
\end{equation}
and in many references this relation is called the fundamental
inequality of triangle $ABC$.

The standard proof is an algebraic one, it was first time given by
W.J.Blundon [5] and it is based on the characterization of cubic
equations with the roots the length sides of a triangle. For more
details we refer to the monograph of D. Mitrinovi\'{c}, J.
Pe\v{c}ari\'{c}, V. Volenec [16], and to the papers of C.Niculescu
[17],[18]. R.A.Satnoianu [20], and S.Wu [22] have obtained some
improvements of this important inequality.

The following result was obtained by D.Andrica and C.Barbu in the
paper [3] and it contains a simple geometric proof of (1). Assume
that the triangle $ABC$ is not equilateral. The following relation
holds :
\begin{equation}
\cos \widehat{ION}=\frac{2R^2 +10Rr -r^2 -s^2}{2(R-2r)\sqrt{R^2
-2Rr}}. \tag{3}
\end{equation}

If we have $R=2r$, then the triangle must be equilateral and we have
equality in (1) and (2). If we assume that $R-2r\neq 0$, then
inequalities (1) are direct consequences of the fact that $-1 \leq
\cos \widehat{ION}\leq 1$.

In this geometric argument the main idea is to consider the points
$O$, $I$ and $N$, and then to get the formula (3). It is a natural
question to see what is a similar formula when we kip the
circumcenter $O$ and we replace the points $I$ and $N$ by other two
points $P$ and $Q$. In this way we obtain Blundon type inequalities
generated by the points $P$ and $Q$. Section 2 contains the basic
facts about the main ingredient helping us to do all the
computations, that is the barycentric coordinates. In Section 3 we
present the analogous formula to (3), for the triangle $POQ$, and
the we derive the Blundon type inequalities generated in this way.
The last section contains some applications of the results in
Section 3 as follows: the classical Blundon's inequalities, the dual
Blundon's inequalities obtained in the paper [3], the Blundon's
inequalities generated by two Cevian points of rank $(k;l;m)$.

\section{Some basic results about barycentric coordinates}

Let $P$ be a point situated in the plane of the triangle $ABC$. The
Cevian triangle $DEF$ is defined by the intersection of the Cevian
lines though the point $P$ and the sides $BC, CA, AB$ of triangle.
If the point $P$ has barycentric coordinates $t_{1}:t_{2}:t_{3}$,
then the vertices of the Cevian triangle $DEF$ have
barycentric coordinates given by:\ $%
D(0:t_{2}:t_{3}),E(t_{1}:0:t_{3})$ and $F(t_{1}:t_{2}:0).$ The
barycentric coordinates were introduced in 1827 by M\"{o}bius (see
[10]). The using of barycentric coordinates defines a distinct part
of Geometry called Barycentric Geometry. More details can be found
in the monographs of C. Bradley [10], C. Coand\u{a}[11], C.
Co\c{s}ni\c{t}\u{a} [12], C. Kimberling [14], and in the papers of
O. Bottema [9], J. Scott [21], and P. Yiu [23].

It is well-known ([11],[12]) that for every point $M$ in the plane
of triangle $ABC$, then the following relation holds:
\begin{equation}
(t_{1}+t_{2}+t_{3})\overrightarrow{MP}=t_{1}\overrightarrow{MA}+t_{2}
\overrightarrow{MB}+t_{3}\overrightarrow{MC}.  \tag{4}
\end{equation}
In the particular case when $M\equiv P,$ we obtain
\[
t_{1}\overrightarrow{PA}+t_{2}\overrightarrow{PB}+t_{3}\overrightarrow{PC}=%
\overrightarrow{0}.
\]
This last relation shows that the point $P$ is the barycenter of the
system $\{A,B,C\}$ with the weights $\{t_1,t_2,t_3\}$. The following
well-known result is very useful in computing distances from the
point $M$ to the barycenter $P$ of the system $\{A,B,C\}$ with the
weights $\{t_1,t_2,t_3\}$.

\begin{theorem}
If $M$ is a point situated in the plane of triangle $ABC$, then
\begin{equation}
(t_{1}+t_{2}+t_{3})^{2}MP^{2}=(t_{1}MA^{2}+t_{2}MB^{2}+t_{3}MC^{2})(t_{1}+t_{2}+t_{3})-(t_{2}t_{3}a^{2}+t_{3}t_{1}b^{2}+t_{1}t_{2}c^{2}),
\tag{5}
\end{equation}
where $a=BC, b=CA, c=AB$, are the length sides of triangle.
\end{theorem}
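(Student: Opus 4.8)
The plan is to obtain (5) by squaring the vector identity (4). Taking $M$ to be an arbitrary point and using the hypothesis that $P$ has barycentric coordinates $t_1:t_2:t_3$, relation (4) reads $(t_1+t_2+t_3)\overrightarrow{MP}=t_1\overrightarrow{MA}+t_2\overrightarrow{MB}+t_3\overrightarrow{MC}$. I would take the scalar square of both sides, which gives
\[
(t_1+t_2+t_3)^2MP^2=t_1^2MA^2+t_2^2MB^2+t_3^2MC^2+2t_1t_2\,\overrightarrow{MA}\cdot\overrightarrow{MB}+2t_2t_3\,\overrightarrow{MB}\cdot\overrightarrow{MC}+2t_3t_1\,\overrightarrow{MC}\cdot\overrightarrow{MA}.
\]

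The key step is to eliminate the three dot products. For any points $X,Y$ one has $XY^2=|\overrightarrow{MY}-\overrightarrow{MX}|^2=MX^2+MY^2-2\,\overrightarrow{MX}\cdot\overrightarrow{MY}$, hence $2\,\overrightarrow{MX}\cdot\overrightarrow{MY}=MX^2+MY^2-XY^2$. Applying this to the pairs $(A,B)$, $(B,C)$, $(C,A)$ and recalling $a=BC$, $b=CA$, $c=AB$ replaces the cross terms by $t_1t_2(MA^2+MB^2-c^2)+t_2t_3(MB^2+MC^2-a^2)+t_3t_1(MC^2+MA^2-b^2)$; in particular the terms $-t_2t_3a^2-t_3t_1b^2-t_1t_2c^2$ appearing on the right of (5) are produced at this stage.

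It remains to collect the squared-distance contributions. The coefficient of $MA^2$ is $t_1^2+t_1t_2+t_3t_1=t_1(t_1+t_2+t_3)$, and symmetrically the coefficients of $MB^2$ and $MC^2$ are $t_2(t_1+t_2+t_3)$ and $t_3(t_1+t_2+t_3)$; summing gives $(t_1+t_2+t_3)(t_1MA^2+t_2MB^2+t_3MC^2)$, which is exactly the first term on the right of (5). I do not expect a genuine obstacle here: the only point to verify is that the regrouping really factors out $t_1+t_2+t_3$, which works because each vertex occurs in precisely two of the three cross terms. (One could instead place $M$ at the origin and argue with position vectors, or invoke a Leibniz/Stewart-type identity, but squaring (4) directly is the shortest route.)
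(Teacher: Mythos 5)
Your proposal is correct and follows essentially the same route as the paper: square relation (4), replace each cross term $2\,\overrightarrow{MX}\cdot\overrightarrow{MY}$ by $MX^{2}+MY^{2}-XY^{2}$, and regroup so that $t_{1}+t_{2}+t_{3}$ factors out of the squared-distance terms. The only (immaterial) difference is that you justify the dot-product identity by expanding $|\overrightarrow{MY}-\overrightarrow{MX}|^{2}$ directly, whereas the paper invokes the Cosine Law; your variant even avoids the degenerate case $M\in\{A,B,C\}$.
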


\begin{proof}
Using the scalar product of two vectors, from (4) we obtain:
\[
(t_{1}+t_{2}+t_{3})^{2}MP^{2}=t_{1}^{2}MA^{2}+t_{2}^{2}MB^{2}+t_{3}^{2}MC^{2}+
2t_{1}t_{2}\overrightarrow{MA}\cdot \overrightarrow{MB}+2t_{1}t_{3}
\overrightarrow{MA}\cdot
\overrightarrow{MC}+2t_{2}t_{3}\overrightarrow{MB} \cdot
\overrightarrow{MC},
\]
that is
\[
(t_{1}+t_{2}+t_{3})^{2}MP^{2}=t_{1}^{2}MA^{2}+t_{2}^{2}MB^{2}+t_{3}^{2}MC^{2}+
\]
\[
t_{1}t_{2}(MA^{2}+MB^{2}-AB^{2})+t_{1}t_{3}(MA^{2}+MC^{2}-AC^{2})+t_{2}t_{3}(MB^{2}+MC^{2}-BC^{2}),
\]
hence,
\[
(t_{1}+t_{2}+t_{3})^{2}MP^{2}=(t_{1}MA^{2}+t_{2}MB^{2}+t_{3}MC^{2})(t_{1}+t_{2}+t_{3})-(t_{2}t_{3}a^{2}+t_{3}t_{1}b^{2}+t_{1}t_{2}c^{2}).
\]
To get the last relation we have used the definition of the scalar
product and the Cosine Law as follows
\[
2\overrightarrow{MA}\cdot \overrightarrow{MB}=2MA\cdot MB\cos
\widehat{AMB}=2MA\cdot MB\cdot \frac{MA^{2}+MB^{2}-AB^{2}}{2MA\cdot
MB}=MA^{2}+MB^{2}-AB^{2}.
\]
\end{proof}

If we consider that $t_{1},t_{2},t_{3},$ and $t_{1}+t_{2}+t_{3}$ are
nonzero real numbers, then the relation $(5)$ becomes the Lagrange's
relation
\begin{equation}
MP^{2}=\frac{t_{1}MA^{2}+t_{2}MB^{2}+t_{3}MC^{2}}{t_{1}+t_{2}+t_{3}}-\frac{
t_{1}t_{2}t_{3}}{(t_{1}+t_{2}+t_{3})^{2}}\left(
\frac{a^{2}}{t_{1}}+\frac{ b^{2}}{t_{2}}+\frac{c^{2}}{t_{3}}\right)
.  \tag{6}
\end{equation}

If we consider in (6) $M\equiv O$, the circumcenter of the triangle,
then it follows
\begin{equation}
R^{2}-OP^{2}=\frac{t_{1}t_{2}t_{3}}{(t_{1}+t_{2}+t_{3})^{2}}\left(
\frac{ a^{2}}{t_{1}}+\frac{b^{2}}{t_{2}}+\frac{c^{2}}{t_{3}}\right).
\tag{7}
\end{equation}

The following version of Cauchy-Schwarz inequality is also known in
the literature as Bergstr\"{o}m's inequality (see [6], [7], [8],
[19]): If $x_{k}, a_k \in \mathbb{R}$ and $a_{k}>0, k=1,2,\cdots
,n,$ then
\[
\frac{x_{1}^{2}}{a_{1}}+\frac{x_{2}^{2}}{a_{2}}+...+\frac{x_{n}^{2}}{a_{n}}
\geq \frac{(x_{1}+x_{2}+...+x_{n})^{2}}{a_{1}+a_{2}+...+a_{n}},
\]
with equality if and only if
\[
\frac{x_{1}}{a_{1}}=\frac{x_{2}}{a_{2}}=...=\frac{x_{n}}{a_{n}}.
\]

Using Bergstr\"{o}m's inequality and relation (4), we obtain
\[
R^{2}-OP^{2}\geq
\frac{t_{1}t_{2}t_{3}}{(t_{1}+t_{2}+t_{3})^{2}}\cdot \frac{
(a+b+c)^{2}}{t_{1}+t_{2}+t_{3}},
\] that is in any triangle with semiperimeter $s$ the following inequality holds:
\[
R^{2}-OP^{2}\geq
\frac{4s^{2}t_{1}t_{2}t_{3}}{(t_{1}+t_{2}+t_{3})^{3}},
\]
where $t_{1}:t_{2}:t_{3}$ are the barycentric coordinates of $P$ and
$t_{1},t_{2},t_{3}>0.$ Equality holds if an only if
$t_{1}=a,t_{2}=b, t_{3}=c$, that is $P\equiv I$, the incenter of the
triangle $ABC$.

\begin{theorem}
$([11],[12]).$ If the points $P$ and $Q$ have barycentric
coordinates $ t_{1}:t_{2}:t_{3}$, and $u_{1}:u_{2}:u_{3}$,
respectively, with respect to the triangle $ABC,$ and
$u=u_{1}+u_{2}+u_{3},t=t_{1}+t_{2}+t_{3}$, then
\begin{equation}
PQ^{2}=-\alpha \beta \gamma \left( \frac{a^{2}}{\alpha
}+\frac{b^{2}}{\beta } +\frac{c^{2}}{\gamma }\right),  \tag{8}
\end{equation}
where the numbers $\alpha, \beta, \gamma$ are defined by
\[
\alpha =\frac{u_{1}}{u}-\frac{t_{1}}{t} ;\beta
=\frac{u_{2}}{u}-\frac{t_{2}}{t} ;\gamma
=\frac{u_{3}}{u}-\frac{t_{3}}{t}.
\]
\end{theorem}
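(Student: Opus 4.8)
The plan is to derive formula (8) as a direct consequence of Lagrange's relation (6), applied with the auxiliary point $M$ taken to be $Q$ itself. First I would observe that, by relation (4) applied at $M \equiv Q$ with the weights $t_1:t_2:t_3$ of $P$, we already have an expression for $QP^2$ from (6), namely
\[
PQ^{2}=\frac{t_{1}QA^{2}+t_{2}QB^{2}+t_{3}QC^{2}}{t}-\frac{t_{1}t_{2}t_{3}}{t^{2}}\left(\frac{a^{2}}{t_{1}}+\frac{b^{2}}{t_{2}}+\frac{c^{2}}{t_{3}}\right).
\]
The quantities $QA^2$, $QB^2$, $QC^2$ are themselves computable from (6) (or better, from (7)) using the barycentric coordinates $u_1:u_2:u_3$ of $Q$: writing $M \equiv O$ in (7) expresses $R^2 - OQ^2$, but more directly, applying (6) with $M$ an arbitrary vertex — or invoking (5) with the single-vertex weights — gives $QA^2$, $QB^2$, $QC^2$ in terms of $u_1,u_2,u_3$ and $a,b,c$. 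So the first block of the proof is purely a matter of assembling these known sub-formulas.

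Next I would substitute these expressions for $QA^2, QB^2, QC^2$ into the displayed formula for $PQ^2$ and collect terms. After clearing denominators (multiplying through by $t^2 u^2$), the right-hand side becomes a quadratic form in the six coordinates $t_i, u_j$; the claim is that it collapses to $-\alpha\beta\gamma\left(\tfrac{a^2}{\alpha}+\tfrac{b^2}{\beta}+\tfrac{c^2}{\gamma}\right)$, i.e. to $-(\beta\gamma\, a^2 + \gamma\alpha\, b^2 + \alpha\beta\, c^2)$ with $\alpha = u_1/u - t_1/t$, etc. The cleanest way to see this is to note that $\alpha + \beta + \gamma = 0$ (since $\sum u_i/u = \sum t_i/t = 1$), and then to check that the coefficient of $a^2$ on both sides agrees — on the left after substitution it should reduce to $\beta\gamma \cdot(-1)$ using $\alpha = -(\beta+\gamma)$ — and similarly for $b^2$ and $c^2$ by the symmetry $a\leftrightarrow b\leftrightarrow c$, $1\leftrightarrow 2\leftrightarrow 3$. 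Because everything is symmetric under this simultaneous cyclic relabeling, verifying the $a^2$-coefficient suffices.

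Alternatively, and perhaps more transparently, I would prove (8) from scratch by the same scalar-product computation used to prove Theorem 2.3 (equation (5)): from (4) applied at a common auxiliary point $M$ we get $t\,\overrightarrow{MP} = \sum t_i \overrightarrow{MA_i}$ and $u\,\overrightarrow{MQ} = \sum u_i \overrightarrow{MA_i}$, hence $\overrightarrow{QP} = \overrightarrow{MP} - \overrightarrow{MQ} = \sum_i \left(\tfrac{t_i}{t} - \tfrac{u_i}{u}\right)\overrightarrow{MA_i} = -\sum_i \alpha_i \overrightarrow{MA_i}$ where $\alpha_1 = \alpha, \alpha_2 = \beta, \alpha_3 = \gamma$ and $\sum \alpha_i = 0$. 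Taking $M$ to be, say, the circumcenter $O$ so that $MA_i = R$ for all $i$, squaring gives $PQ^2 = \sum_{i} \alpha_i^2 R^2 + 2\sum_{i<j}\alpha_i\alpha_j \,\overrightarrow{OA_i}\cdot\overrightarrow{OA_j}$, and $2\,\overrightarrow{OA_i}\cdot\overrightarrow{OA_j} = 2R^2 - (\text{opposite side})^2$; using $\left(\sum \alpha_i\right)^2 = 0$ to kill all the $R^2$ terms leaves exactly $PQ^2 = -(\beta\gamma\, a^2 + \gamma\alpha\, b^2 + \alpha\beta\, c^2)$, which is (8). The main obstacle is simply the bookkeeping of the index/side correspondence (the side opposite $A$ is $a$, so the $\overrightarrow{OB}\cdot\overrightarrow{OC}$ cross term carries $a^2$, giving the coefficient $\beta\gamma$ on $a^2$) and confirming that the $R^2$ contributions cancel; there is no genuine difficulty beyond organizing this carefully.
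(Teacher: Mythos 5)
Your proposal is correct, but note that the paper itself gives no proof of this statement: Theorem 2.3 is quoted from the references [11],[12], so there is no internal argument to compare against. Of your two routes, the second one is the cleaner and is essentially the standard proof, and it is also the one closest in spirit to the paper's own toolkit: it repeats the scalar-product expansion the authors use to prove Theorem 2.1 (formula (5)), now applied to $\overrightarrow{QP}=\sum_i\bigl(\frac{t_i}{t}-\frac{u_i}{u}\bigr)\overrightarrow{MA_i}=-(\alpha\overrightarrow{MA}+\beta\overrightarrow{MB}+\gamma\overrightarrow{MC})$ with $\alpha+\beta+\gamma=0$. Your bookkeeping is right: the cross term $\overrightarrow{OB}\cdot\overrightarrow{OC}$ carries $a^{2}$, so the coefficient of $a^{2}$ is $-\beta\gamma$, and the $R^{2}$ contributions assemble into $R^{2}(\alpha+\beta+\gamma)^{2}=0$, yielding $PQ^{2}=-(\beta\gamma a^{2}+\gamma\alpha b^{2}+\alpha\beta c^{2})$, which is exactly (8) in expanded form. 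One small simplification: the choice $M\equiv O$ is not needed, since for an arbitrary $M$ the coefficient of each $MA_i^{2}$ is $\alpha_i(\alpha_1+\alpha_2+\alpha_3)=0$, so the same cancellation occurs; this also avoids any appeal to the circumradius. Your first route (substituting the vertex distances $QA^{2},QB^{2},QC^{2}$ obtained from (6) into Lagrange's relation with $M\equiv Q$) also works but is only sketched, and the claimed collapse of the resulting quadratic form is precisely the algebra that the second argument does transparently; the cyclic-symmetry reduction you invoke there is legitimate. A final pedantic point worth recording: the right-hand side of (8) as written presupposes $\alpha,\beta,\gamma\neq 0$, whereas what your argument actually proves is the identity $PQ^{2}=-(\beta\gamma a^{2}+\gamma\alpha b^{2}+\alpha\beta c^{2})$, valid without that restriction; this is the form one should cite when some coordinate differences vanish.
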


\section{Blundon type inequalities generated by two points}

\begin{theorem}
Let $P$ and $Q$ be two points different from the circumcircle $O$,
having the barycentric coordinates $t_{1}:t_{2}:t_{3}$ and
$u_{1}:u_{2}:u_{3}$ with respect to the triangle $ABC,$ and let
$u=u_{1}+u_{2}+u_{3},t=t_{1}+t_{2}+t_{3}$. If $t_{1},t_{2},t_{3},
u_{1},u_{2},u_{3}\neq 0$, then the following relation holds
\begin{equation}
\cos \widehat{POQ}=\frac{2R^{2}-\frac{t_{1}t_{2}t_{3}}{ t^{2}}\left(
\frac{a^{2}}{t_{1}}+\frac{b^{2}}{t_{2}}+\frac{ c^{2}}{t_{3}}\right)
-\frac{u_{1}u_{2}u_{3}}{u^{2}}\left(
\frac{a^{2}}{u_{1}}+\frac{b^{2}}{u_{2}}+\frac{c^{2}}{u_{3}}\right)
+\alpha \beta \gamma \left( \frac{a^{2}}{\alpha }+\frac{b^{2}}{\beta
}+\frac{c^{2}}{ \gamma }\right) }{2\sqrt{\left[
R^{2}-\frac{t_{1}t_{2}t_{3}}{ t^2}\left(
\frac{a^{2}}{t_{1}}+\frac{b^{2}}{t_{2}}+\frac{ c^{2}}{t_{3}}\right)
\right] \cdot \left[ R^{2}-\frac{u_{1}u_{2}u_{3}}{ u^{2}}\left(
\frac{a^{2}}{u_{1}}+\frac{b^{2}}{u_{2}}+\frac{ c^{2}}{u_{3}}\right)
\right] }}  \tag{9}
\end{equation}
where $a,b,c$ are the length sides of the triangle and
\[
\alpha =\frac{u_{1}}{u}-\frac{t_{1}}{t} ;\beta
=\frac{u_{2}}{u}-\frac{t_{2}}{t} ;\gamma
=\frac{u_{3}}{u}-\frac{t_{3}}{t}.  \tag{10}
\]

\end{theorem}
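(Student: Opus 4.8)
The plan is to compute $\cos\widehat{POQ}$ from the Law of Cosines in triangle $POQ$, namely
\[
\cos\widehat{POQ}=\frac{OP^{2}+OQ^{2}-PQ^{2}}{2\,OP\cdot OQ},
\]
and then substitute the three squared lengths using the formulas already established in the excerpt. First I would use relation (7) with the barycentric coordinates $t_{1}:t_{2}:t_{3}$ of $P$ to write
\[
OP^{2}=R^{2}-\frac{t_{1}t_{2}t_{3}}{t^{2}}\left(\frac{a^{2}}{t_{1}}+\frac{b^{2}}{t_{2}}+\frac{c^{2}}{t_{3}}\right),
\]
and likewise $OQ^{2}=R^{2}-\dfrac{u_{1}u_{2}u_{3}}{u^{2}}\left(\dfrac{a^{2}}{u_{1}}+\dfrac{b^{2}}{u_{2}}+\dfrac{c^{2}}{u_{3}}\right)$ from (7) applied to $Q$. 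For the third side I would invoke Theorem 2.3 (formula (8)) to get
\[
PQ^{2}=-\alpha\beta\gamma\left(\frac{a^{2}}{\alpha}+\frac{b^{2}}{\beta}+\frac{c^{2}}{\gamma}\right),
\]
with $\alpha,\beta,\gamma$ as in (10).

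Next I would assemble the numerator $OP^{2}+OQ^{2}-PQ^{2}$. The two copies of $R^{2}$ from $OP^{2}$ and $OQ^{2}$ combine to $2R^{2}$; subtracting $PQ^{2}$ flips the sign of the bracket in (8), producing the $+\,\alpha\beta\gamma\left(\frac{a^{2}}{\alpha}+\frac{b^{2}}{\beta}+\frac{c^{2}}{\gamma}\right)$ term; and the remaining two terms appear with their natural signs. This is exactly the numerator displayed in (9). The denominator $2\,OP\cdot OQ$ becomes $2\sqrt{OP^{2}}\sqrt{OQ^{2}}$, which upon substituting the expressions for $OP^{2}$ and $OQ^{2}$ is precisely the product of square roots shown in (9). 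One should note here the hypothesis that $P$ and $Q$ are different from the circumcenter $O$ (so that $OP,OQ\neq 0$ and the division is legitimate), and that $t_{1},t_{2},t_{3},u_{1},u_{2},u_{3}\neq 0$ so that formulas (7) and (8) apply with their denominators intact; the quantity under each square root is nonnegative since it equals a squared length.

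The computation is essentially a direct substitution, so there is no deep obstacle; the only point requiring a little care is bookkeeping. The mild subtlety is confirming that the angle $\widehat{POQ}$ appearing in the Law of Cosines at vertex $O$ of triangle $POQ$ is the same angle whose cosine we want, and that the formula remains valid (by continuity or by the unsigned nature of the Law of Cosines) even when $O$, $P$, $Q$ are collinear, i.e. when the "triangle" $POQ$ is degenerate — in that case $\cos\widehat{POQ}=\pm 1$ and the identity still holds as an algebraic identity among the squared distances. Apart from that, the proof is just (7) twice, (8) once, and the Law of Cosines.
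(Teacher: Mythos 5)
Your proposal matches the paper's proof exactly: it applies relation (7) to $P$ and $Q$ to get $OP^{2}$ and $OQ^{2}$, uses formula (8) for $PQ^{2}$, and combines these via the Law of Cosines at $O$, which is precisely the argument given in the paper. The extra remarks on nondegeneracy and the hypotheses are fine but not needed beyond what the paper assumes.
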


\begin{proof}
Applying the relation (7) for the points $P$ and $Q$, we have
\begin{equation}
OP^{2}=R^{2}-\frac{t_{1}t_{2}t_{3}}{t^{2}}\left( \frac{
a^{2}}{t_{1}}+\frac{b^{2}}{t_{2}}+\frac{c^{2}}{t_{3}}\right)
\tag{11}
\end{equation}
and%
\begin{equation}
OQ^{2}=R^{2}-\frac{u_{1}u_{2}u_{3}}{u^{2}}\left( \frac{
a^{2}}{u_{1}}+\frac{b^{2}}{u_{2}}+\frac{c^{2}}{u_{3}}\right)
\tag{12}
\end{equation}
We use the Law of Cosines in the triangle $POQ$ to obtain
\begin{equation}
\cos \widehat{POQ}=\frac{OP^{2}+OQ^{2}-PQ^{2}}{2OP\cdot OQ},
\tag{13}
\end{equation}
and from relations (8), (11), (12) and (13) we obtain the relation
(9).
\end{proof}

\begin{theorem}
Let $P$ and $Q$ be two points different from the circumcircle $O$,
having the barycentric coordinates $t_{1}:t_{2}:t_{3}$ and
$u_{1}:u_{2}:u_{3}$ with respect to the triangle $ABC,$ and let
$u=u_{1}+u_{2}+u_{3},t=t_{1}+t_{2}+t_{3}$. If $t_{1},t_{2},t_{3},
u_{1},u_{2},u_{3}\neq 0$, then the following inequalities hold
\[
-2\sqrt{ \left[ R^{2}-\frac{t_1t_2t_3}{t^2}\left(
\frac{a^2}{t_1}+\frac{b^2}{t_2}+\frac{c^2}{t_3}\right) \right] \cdot
\left[ R^{2}-\frac{u_1u_2u_3}{u^2}\left(
\frac{a^2}{u_1}+\frac{b^2}{u_2}+\frac{c^2}{u_3}\right) \right] }
\leq
\]
\[
\alpha \beta \gamma \left( \frac{a^2}{\alpha }+\frac{b^2}{\beta
}+\frac{ c^2}{\gamma }\right) +2R^{2}-\left[ \frac{t_1t_2t_3}{
t^2}\left( \frac{a^2}{t_1}+\frac{b^2}{t_2}+\frac{ c^2}{t_3}\right)
+\frac{u_1u_2u_3}{u^2}\left(
\frac{a^2}{u_1}+\frac{b^2}{u_2}+\frac{c^2}{u_3}\right) \right] \leq
\]
\begin{equation}
2\sqrt{ \left[ R^{2}-\frac{t_1t_2t_3}{t^2}\left(
\frac{a^2}{t_1}+\frac{b^2}{t_2}+\frac{c^2}{t_3}\right) \right] \cdot
\left[ R^{2}-\frac{u_1u_2u_3}{u^2}\left(
\frac{a^2}{u_1}+\frac{b^2}{u_2}+\frac{c^2}{u_3}\right) \right] }
\tag{14}
\end{equation}
where $a,b,c$ are the length sides of the triangle and the numbers
$\alpha, \beta, \gamma$ are defined by (10).

\end{theorem}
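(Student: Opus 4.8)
The plan is to read off the chain (14) directly from Theorem 3.1, using nothing beyond the elementary bound $-1\le\cos\widehat{POQ}\le 1$.

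First I would identify the quantities occurring in (14) with the metric data of the triangle $POQ$. By formulas (11) and (12), the two brackets
$$R^{2}-\frac{t_1t_2t_3}{t^2}\Bigl(\frac{a^2}{t_1}+\frac{b^2}{t_2}+\frac{c^2}{t_3}\Bigr)\qquad\text{and}\qquad R^{2}-\frac{u_1u_2u_3}{u^2}\Bigl(\frac{a^2}{u_1}+\frac{b^2}{u_2}+\frac{c^2}{u_3}\Bigr)$$
are exactly $OP^{2}$ and $OQ^{2}$, respectively; hence the expression appearing (with a minus sign on the left end, with a plus sign on the right end) as the common bound in (14) equals $2\sqrt{OP^{2}\cdot OQ^{2}}=2\,OP\cdot OQ$. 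Since by hypothesis neither $P$ nor $Q$ coincides with $O$, this number is strictly positive. Next I would observe that the middle member of (14) is, after expanding the bracket $\bigl[\frac{t_1t_2t_3}{t^2}(\cdots)+\frac{u_1u_2u_3}{u^2}(\cdots)\bigr]$ and regrouping, term-for-term identical with the numerator of the fraction in (9), namely $2R^{2}-\frac{t_1t_2t_3}{t^2}(\cdots)-\frac{u_1u_2u_3}{u^2}(\cdots)+\alpha\beta\gamma(\cdots)$.

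Consequently, Theorem 3.1 asserts precisely that the middle member of (14) equals $(2\,OP\cdot OQ)\cos\widehat{POQ}$. Multiplying the trivial inequalities $-1\le\cos\widehat{POQ}\le 1$ through by the positive factor $2\,OP\cdot OQ$ then yields (14). The only point that requires attention — the main (and essentially only) obstacle — is this positivity: the assumption that $P$ and $Q$ differ from the circumcenter $O$ is exactly what guarantees $OP\neq 0$ and $OQ\neq 0$, so that the angle $\widehat{POQ}$ is well defined and both the Law of Cosines (13) and the formula (9) are legitimate. The possibly degenerate configuration in which $O$, $P$, $Q$ are collinear needs no separate treatment, since there $\cos\widehat{POQ}=\pm1$ and (14) simply holds with equality on one of its two sides. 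Beyond this, no computation is involved: the statement is merely a rescaling of (9).
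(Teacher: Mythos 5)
Your proposal is correct and follows essentially the same route as the paper: both deduce (14) from Theorem 3.1 by multiplying the trivial bounds $-1\le\cos\widehat{POQ}\le 1$ by the positive quantity $2\,OP\cdot OQ$, whose square is the product under the square roots via (11) and (12). Your explicit attention to the positivity of $OP\cdot OQ$ and to the collinear (equality) configurations matches the equality discussion the paper gives.
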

\begin{proof}
The inequalities (14) are simple direct consequences of the fact
that $ -1\leq \cos \widehat{POQ}\leq 1.$ The equality in the right
inequality holds if and only if $\widehat{POQ}=0$, that is the
points $O, P, Q$ are collinear in the order $O, P, Q$ or $O, Q, P$.
The equality in the left inequality holds if and only if
$\widehat{POQ}=\pi$, that is the points $O, P, Q$ are collinear in
the order $P, O, Q$ or $Q, O, P$.
\end{proof}

From Theorem 3.1 it follows that it is a natural and important
problem to construct the triangle $ABC$ from the points $O, P, Q$,
when we know their barycentric coordinates. In the special case when
$P\equiv I$ and $Q\equiv N$ we know that that points $I,G,N$ are
collinear, determining the Nagel line of triangle, and the centroid
$G$ lies on the segment $IN$ such that $IG=\frac{1}{3}IN$. Then,
using the Euler's line of the triangle, we get the orthocenter $H$
on the ray $(OG$ such that $OH=3OG$. In this case the problem is
reduced to the famous Euler's determination problem i.e. to
construct a triangle from its incenter $I$, circumcenter $O$, and
orthocenter $H$ (see the paper of P.Yiu [24] for details and
results). This is a reason to call the problem as the general
determination problem.

\section{Applications}

The formula (3) and the classical Blundon's inequalities (1) can be
obtained from (9) and (14) by considering $P=I$, the incenter, and
$Q=N$, the Nagel point of the triangle. Indeed, the barycentric
coordinates of incenter $I\ $ and of Nagel's point $N\ $ are
$(t_{1},t_{2},t_{3})=(a,b,c)$, and
$(u_{1},u_{2},u_{3})=(s-a,s-b,s-c),$ respectively. We have
\begin{equation}
u=u_{1}+u_{2}+u_{3}=s-a+s-b+s-c=s,
u_{1}u_{2}u_{3}=(s-a)(s-b)(s-c)=r^{2}s,  \tag{15}
\end{equation}
and
\begin{equation}
t=t_{1}+t_{2}+t_{3}=2s,\text{\ }t_{1}t_{2}t_{3}=abc=4Rrs.  \tag{16}
\end{equation}
We obtain
\begin{equation}
\alpha =\frac{s-a}{s}-\frac{a}{2s}=\frac{2s-3a}{2s}, \beta =\frac{
2s-3b}{2s}, \gamma =\frac{2s-3c}{2s}.  \tag{17}
\end{equation}
Therefore
\[
\alpha \beta \gamma \left( \frac{a^{2}}{\alpha }+\frac{b^{2}}{\beta
}+\frac{ c^{2}}{\gamma }\right) =\sum_{cyc} \beta \gamma
a^{2}=\sum_{cyc} \left( 1-\frac{3b}{2s} \right) \left(
1-\frac{3c}{2s}\right) a^{2}=
\]
\[
\sum{cyc} a^{2}-\frac{3}{2s}\sum_{cyc} \left[
a^{2}(a+b+c)-a^{3}\right] +\frac{9abc}{ 4s^{2}}\sum_{cyc} a=
\]
\[
\sum_{cyc} a^{2}-3\sum_{cyc} a^{2}+\frac{3}{2s}\sum_{cyc}
a^{3}+\frac{9abc}{2s}=
\]
\[
-2(2s^{2}-2r^{2}-8Rr)+3(s^{2}-3r^{2}-6Rr)+18Rr
\]
that is
\begin{equation}
\alpha \beta \gamma \left( \frac{a^2}{\alpha }+\frac{b^2}{\beta
}+\frac{c^2}{\gamma } \right) =-s^2-5r^2+16Rr.  \tag{18}
\end{equation}
Now, using (16) and (17) we get
\begin{equation}
\frac{t_{1}t_{2}t_{3}}{t}\left( \frac{a^{2}}{t_{1}}+
\frac{b^{2}}{t_{2}}+\frac{c^{2}}{t_{3}}\right)
=\frac{4Rrs}{4s^2}\cdot 2s=2Rr,  \tag{19}
\end{equation}
and
\[
\frac{u_{1}u_{2}u_{3}}{u}\left( \frac{a^{2}}{u_{1}}+
\frac{b^{2}}{u_{2}}+\frac{c^{2}}{u_{3}}\right)
=\frac{r^{2}s}{s^{2}}\left(
\frac{a^{2}}{s-a}+\frac{b^{2}}{s-b}+\frac{c^{2}}{s-c}\right) =
\]
\[
\frac{r^{2}}{s}\cdot \frac{\sum_{cyc} a^{2}(s-b)(s-c)}{r^{2}s}=\frac{1}{s^{2}}%
\left\{ s^{2}\sum_{cyc} a^{2}-s\sum_{cyc} \left[
a^{2}(a+b+c)-a^{3}\right] +abc\sum_{cyc} a\right\} =
\]
\[
\frac{1}{s^{2}}\left( s\sum_{cyc} a^{3}-s^{2}\sum_{cyc}
a^{2}+8Rrs^{2}\right) =
\]
\begin{equation}
\frac{1}{s^{2}}\left[
2s^{2}(s^{2}-3r^{2}-6Rr)-s^{2}(2s^{2}-2r^{2}-8Rr)+8Rrs^{2}\right]
=4Rr-4r^{2} \tag{20}
\end{equation}
Using the relations (18)-(20) in (9) we obtain the relation (3).
These computations are similar to those given by complex numbers in
[1].

Now, consider the excenters $I_a ,I_b ,I_c$, and $N_a ,N_b ,N_c$ the
adjoint points to the Nagel point $N$. For the definition and some
properties of the adjoint points $N_a ,N_b ,N_c$ we refer to the
paper of D.Andrica and K.L.Nguyen [2]. Let $s, R, r, r_a ,r_b ,r_c$
be the semiperimeter, circumradius, inradius, and exradii of
triangle $ABC$, respectively. Considering the triangle $I_aON_a$,
D.Andrica and C.Barbu [3] have proved the following formula
\begin{equation}
\cos \widehat{I_aON_a}= \frac{R^2 -3Rr_a -r_a^2 -\alpha}{(R+2r_a
)\sqrt{R^2 +2Rr_a}},\tag{21}
\end{equation}
where $\alpha =\frac{a^2 +b^2 +c^2}{4}$.

Using formula (21),we get the dual form of Blundon's inequalities
given in the paper [3]
\begin{equation}
0\leq \frac{a^2 +b^2 +c^2}{4}\leq R^2 -3Rr_a -r_a^2 +(R+2r_a
)\sqrt{R^{2}+2Rr_a}. \tag{22}
\end{equation}
There are similar inequalities involving the exradii $r_b$ and
$r_c$.

We known that the barycentric coordinates of the excenter $I_{a}$ are $%
(t_{1},t_{2},t_{3})=(-a,b,c)$, and of the adjoint Nagel point $N_{a}$ are $%
(u_{1},u_{2},u_{3})=(s,c-s,b-s)$. Using formula (9) we can obtain
the relation (21) and then the dual form of the classical Blundon's
inequalities (22).

We have
\begin{equation*}
u=u_{1}+u_{2}+u_{3}=s-a,\text{ }u_{1}u_{2}u_{3}=s(s-b)(s-c)
\end{equation*}%
and
\begin{equation*}
t=t_{1}+t_{2}+t_{3}=2(s-a),\text{\ }t_{1}t_{2}t_{3}=-abc=-4Rrs.
\end{equation*}%
We obtain
\begin{equation*}
\alpha =\frac{2s+a}{2(s-a)}=1+\frac{3a}{2(s-a)},
\end{equation*}%
\begin{equation*}
\beta =\frac{2c-2s-b}{2(s-a)}=1-\frac{3b}{2(s-a)},
\end{equation*}%
\begin{equation*}
\gamma =\frac{2b-2s-c}{2(s-a)}=1-\frac{3c}{2(s-a)}.
\end{equation*}%
Therefore,
\begin{equation}
\frac{t_{1}t_{2}t_{3}}{t^{2}}\left( \frac{a^{2}}{t_{1}}+\frac{b^{2}}{t_{2}}+%
\frac{c^{2}}{t_{3}}\right) =\frac{-4Rrs}{4(s-a)^{2}}\cdot
2(s-a)=-2Rr_{a}, \tag{23}
\end{equation}%
and
\begin{equation*}
\frac{u_{1}u_{2}u_{3}}{u^{2}}\left( \frac{a^{2}}{u_{1}}+\frac{b^{2}}{u_{2}}+%
\frac{c^{2}}{u_{3}}\right) =\frac{s(s-c)(s-b)}{(s-a)^{2}}\left( \frac{a^{2}}{%
s}-\frac{b^{2}}{s-c}-\frac{c^{2}}{s-b}\right) =
\end{equation*}%
\begin{equation*}
-\left( -a^{2}\cdot \frac{s-b}{s-a}\cdot \frac{s-c}{s-a}+b^{2}\cdot \frac{s}{%
s-a}\cdot \frac{s-b}{s-a}+c^{2}\cdot \frac{s}{s-a}\cdot \frac{s-c}{s-a}%
\right) =
\end{equation*}%
\begin{equation*}
-\left( -a^{2}\cdot \frac{r_{a}}{r_{b}}\cdot
\frac{r_{a}}{r_{c}}+b^{2}\cdot \frac{r_{a}}{r}\cdot
\frac{r_{a}}{r_{b}}+c^{2}\cdot \frac{r_{a}}{r}\cdot
\frac{r_{a}}{r_{c}}\right) =
\end{equation*}%
\begin{equation}
-r_{a}^{2}\left( \frac{-a^{2}}{r_{b}r_{c}}+\frac{b^{2}}{rr_{b}}+\frac{c^{2}}{%
rr_{c}}\right) =-r_{a}^{2}\left( \frac{4R}{r_{a}}+4\right)
=-4Rr_{a}-4r_{a}^{2},  \tag{24}
\end{equation}%
where we have used the relation $\frac{-a^{2}}{r_{b}r_{c}}+\frac{b^{2}}{rr_{b}}+%
\frac{c^{2}}{rr_{c}}=\frac{4R}{r_{a}}+4$ (see [2], p. 134).

Now, we will calculate the expression:%
\begin{equation*}
E=\alpha \beta \gamma \left( \frac{a^{2}}{\alpha }+\frac{b^{2}}{\beta }+%
\frac{c^{2}}{\gamma }\right) +\frac{a^{2}+b^{2}+c^{2}}{2}=
\end{equation*}%
\begin{equation*}
a^{2}\beta \gamma +\frac{a^{2}}{2}+b^{2}\alpha \gamma +\frac{b^{2}}{2}%
+c^{2}\alpha \beta +\frac{c^{2}}{2}=
\end{equation*}%
\begin{equation*}
a^{2}\left[ 1-\frac{3(b+c)}{2(s-a)}+\frac{9bc}{4(s-a)^{2}}\right] +\frac{%
a^{2}}{2}+
\end{equation*}%
\begin{equation*}
b^{2}\left[ 1+\frac{3(a-c)}{2(s-a)}-\frac{9ca}{4(s-a)^{2}}\right] +\frac{%
b^{2}}{2}+
\end{equation*}%
\begin{equation*}
c^{2}\left[ 1+\frac{3(a-b)}{2(s-a)}-\frac{9ab}{4(s-a)^{2}}\right] +\frac{%
c^{2}}{2},
\end{equation*}%
that is %
\begin{equation*}
E=a^{2}\left[ \frac{-3s}{2(s-a)}+\frac{9bc}{4(s-a)^{2}}\right]
+b^{2}\left[
\frac{3(s-c)}{2(s-a)}-\frac{9ca}{4(s-a)^{2}}\right] +c^{2}\left[ \frac{3(s-b)%
}{2(s-a)}-\frac{9ab}{4(s-a)^{2}}\right] =
\end{equation*}%
\begin{equation*}
\frac{3}{2(s-a)}[-a^{2}s+b^{2}(s-c)+c^{2}(s-b)]+\frac{9abc}{4(s-a)^{2}}%
(a-b-c)=
\end{equation*}%
\begin{equation}
\frac{3}{2(s-a)}[s(-a^{2}+b^{2}+c^{2})-bc(b+c)]-18Rr_{a}.  \tag{25}
\end{equation}%
We have
\begin{equation*}
s(-a^{2}+b^{2}+c^{2})-bc(b+c)=2sbc\cos A-2bcs+abc=
\end{equation*}%
\begin{equation*}
2sbc(\cos A-1)+abc=abc-4sbc\sin ^{2}\frac{A}{2}=
\end{equation*}%
\begin{equation}
abc-4s(p-b)(p-c)=abc-4Sr_{a}=4S(R-r_{a}),  \tag{26}
\end{equation}%
where $S$ denotes the area of triangle $ABC.$ From relations (25) and (26) we get%
\begin{equation*}
E=\frac{3}{2(s-a)}\cdot
4S(R-r_{a})-18Rr_{a}=6r_{a}(R-r_{a})-18Rr_{a}=-12Rr_{a}-6r_{a}^{2},
\end{equation*}%
therefore%
\begin{equation}
\alpha \beta \gamma \left( \frac{a^{2}}{\alpha }+\frac{b^{2}}{\beta }+\frac{%
c^{2}}{\gamma }\right)
=-12Rr_{a}-6r_{a}^{2}-\frac{a^{2}+b^{2}+c^{2}}{2} \tag{27}
\end{equation}

Using formulas (23), (24) and (27) in the general formula (9) we
obtain the relation (21).

In the paper $[13],$ N. Minculete and C. Barbu have introduced the
Cevians
of rank $(k;l;m)$. The line $AD$ is called \textit{ex-Cevian of rank }$%
\mathit{(k;l;m)}$ \textit{\ }or \textit{exterior Cevian of rank }$\mathit{%
(k;l;m),}$ if the point $D$ is situated on side $\left( BC\right) $
of the non-isosceles triangle $ABC$ and the following relation
holds:
\begin{equation*}
\frac{BD}{DC}=\left( \frac{c}{b}\right) ^{k}\cdot \left( \frac{s-c}{s-b}%
\right) ^{l}\cdot \left( \frac{a+b}{a+c}\right) ^{m}.
\end{equation*}%
In the paper $[13]$ it is proved that the Cevians of rank $(k;l;m)$
are
concurrent in the point $I(k,l,m)$ called \textit{the Cevian point of rank }$%
\mathit{(k;l;m)}$ and the barycentric coordinates of $I(k,l,m)$ are:
\begin{equation*}
a^{k}(s-a)^{l}(b+c)^{m}:b^{k}(s-b)^{l}(a+c)^m:c^{k}(s-c)^{l}(a+b)^{m}.
\end{equation*}
In the case $l=m=0$, we obtain the Cevian point of rank $k$.

Let $I_{1},I_{2}$ be two Cevian points with barycentric coordinates:
\begin{equation*}
I_{i}[a^{k_{i}}(s-a)^{l_{i}}(b+c)^{m_{i}}:b^{k_{i}}(s-b)^{l_{i}}(a+c)^{m_{i}}:c^{k_{i}}(s-c)^{l_{i}}(a+b)^{m_{i}}],%
\text{ }i=1,2.
\end{equation*}%
Denote $t_i^1 = a^{k_{i}}(s-a)^{l_{i}}(b+c)^{m_{i}}$, $%
t_i^2=b^{k_{i}}(s-b)^{l_{i}}(a+c)^{m_{i}},t_i^3
=c^{k_{i}}(s-c)^{l_{i}}(a+b)^{m_{i}}, i=1,2.$  From formula (9) we
obtain
\begin{equation}
\cos \widehat{I_{1}OI_{2}}=\frac{2R^{2}-\frac{t_{1}^{1}t_{1}^{2}t_{1}^{3}}{%
(T_1)^{2}}\left( \frac{a^{2}}{t_{1}^{1}}+\frac{b^{2}}{t_{1}^{2}}+\frac{c^{2}}{%
t_{1}^{3}}\right) -\frac{t_{2}^{1}t_{2}^{2}t_{2}^{3}}{(T_2)^{2}}\left( \frac{%
a^{2}}{t_{2}^{1}}+\frac{b^{2}}{t_{2}^{2}}+\frac{c^{2}}{t_{2}^{3}}\right)
+\alpha \beta \gamma \left( \frac{a^{2}}{\alpha }+\frac{b^{2}}{\beta }+\frac{%
c^{2}}{\gamma }\right) }{2\sqrt{\left[ R^{2}-\frac{%
t_{1}^{1}t_{1}^{2}t_{1}^{3}}{(T_1)^{2}}\left( \frac{a^{2}}{t_{1}^{1}}+\frac{b^{2}%
}{t_{1}^{2}}+\frac{c^{2}}{t_{1}^{3}}\right) \right] \cdot \left[ R^{2}-\frac{%
t_{2}^{1}t_{2}^{2}t_{2}^{3}}{(T_2)^{2}}\left( \frac{a^{2}}{t_{2}^{1}}+\frac{b^{2}%
}{t_{2}^{2}}+\frac{c^{2}}{t_{2}^{3}}\right) \right] }},\tag{28}
\end{equation}%
\bigskip

where $T_1=t_1^1+t_1^2+t_1^3, T_2=t_2^1+t_2^2+t_2^3$, and for $i=1,2,$ we have%
\begin{equation*}
\frac{t_{i}^{1}t_{i}^{2}t_{i}^{3}}{(T_i)^{2}}\left( \frac{a^{2}}{t_{i}^{1}}+%
\frac{b^{2}}{t_{i}^{2}}+\frac{c^{2}}{t_{i}^{3}}\right) =
\frac{\underset{cyc}{\prod }a^{k_{i}}(s-a)^{l_{i}}(b+c)^{m_{i}}}{\underset{%
cyc}{\sum }a^{k_{i}}(s-a)^{l_{i}}(b+c)^{m_{i}}}\underset{cyc}{\sum }\frac{%
a^{2}}{a^{k_{i}}(s-a)^{l_{i}}(b+c)^{m_{i}}},
\end{equation*}%
and%
\begin{equation*}
\alpha =\frac{a^{k_{1}}(s-a)^{l_{1}}(b+c)^{m_{1}}}{\underset{cyc}{\sum }%
a^{k_{1}}(s-a)^{l_{1}}(b+c)^{m_{1}}}-\frac{%
a^{k_{2}}(s-a)^{l_{2}}(b+c)^{m_{2}}}{\underset{cyc}{\sum }%
a^{k_{2}}(s-a)^{l_{2}}(b+c)^{m_{2}}},
\end{equation*}%
\begin{equation*}
\beta =\frac{b^{k_{1}}(s-b)^{l_{1}}(a+c)^{m_{1}}}{\underset{cyc}{\sum }%
a^{k_{1}}(s-a)^{l_{1}}(b+c)^{m_{1}}}-\frac{%
b^{k_{2}}(s-b)^{l_{2}}(a+c)^{m_{2}}}{\underset{cyc}{\sum }%
a^{k_{2}}(s-a)^{l_{2}}(b+c)^{m_{2}}},
\end{equation*}%
\begin{equation*}
\gamma =\frac{c^{k_{1}}(s-c)^{l_{1}}(a+b)^{m_{1}}}{\underset{cyc}{\sum }%
a^{k_{1}}(s-a)^{l_{1}}(b+c)^{m_{1}}}-\frac{%
c^{k_{2}}(s-c)^{l_{2}}(a+b)^{m_{2}}}{\underset{cyc}{\sum }%
a^{k_{2}}(s-a)^{l_{2}}(b+c)^{m_{2}}}.
\end{equation*}

If $I_1, I_2$ are Cevian points of rank $k_1, k_2$, then formula
(28) becomes
\begin{equation}
\cos \widehat{I_{1}OI_{2}}=\frac
{2R^2-(abc)^{k_1}\frac{S_{2-k_1}}{(S_{k_1})^2}-(abc)^{k_2}\frac{S_{2-k_2}}{(S_{k_2})^2}
+\sum_{cyc}(\frac{b^{k_1}}{S_{k_1}}-\frac{b^{k_2}}{S_{k_2}})(\frac{c^{k_1}}{S_{k_1}}-\frac{c^{k_2}}{S_{k_2}})a^2}
{2\sqrt{[R^2-(abc)^{k_1}\frac{S_{2-k_1}}{(S_{k_1})^2}][R^2-(abc)^{k_2}\frac{S_{2-k_2}}{(S_{k_2})^2}]}},\tag{29}
\end{equation}
where $S_l=a^l+b^l+c^l$.

Here are few special cases of formula (29). For $k_1=0$ and $k_2=1$
we get the centroid $G$ and the incenter $I$ of barycentric
coordinates $(1;1;1)$ and $(a;b;c)$, respectively. Formula (29)
becomes

\begin{equation}
\cos \widehat{GOI}=\frac{6R^{2}-s^{2}-r^{2}+2Rr}{2\sqrt{%
9R^{2}-2s^{2}+2r^{2}+8Rr}\cdot \sqrt{R^{2}-2Rr}},  \tag{30}
\end{equation}%
where $abc=4sRr,$ $S_{0}=3,$ $S_{1}=2s,$ $S_{2}=2(s^{2}-r^{2}-4Rr).$

For $k_{2}=2$ we obtain the Lemoine point $L$ of triangle $ABC$, of barycentric coordinates $%
(a^{2};b^{2};c^{2})$, and other two formulas are generated
\begin{equation}
\cos \widehat{GOL}=\frac{6R^{2}S_{2}-S_{2}^{2}+4S_{4}}{2\sqrt{9R^{2}-S_{2}}%
\cdot \sqrt{R^{2}-S_{2}^{2}-48(Rrs)^{2}}},  \tag{31}
\end{equation}%
where $S_{4}=S_{2}^{2}-2[(s^{2}+r^{2}+4Rr)^{2}-16Rrs^{2}],$ and
\begin{equation}
\cos \widehat{IOL}=\frac{RS_{2}+rS_{2}-4rs^{2}}{2\sqrt{R^{2}-2Rr}\cdot \sqrt{%
S_{2}^{2}-48r^{2}s^{2}}}.  \tag{32}
\end{equation}%
Each of the formulas (30),(31),(32) generates a Blundon type
inequality, but these inequalities have not nice geometric
interpretations.

Let $I_{1},I_{2},I_{3}$ be three Cevian points of rank\textit{\ }$\mathit{%
(k;l;m)}$ with barycentric coordinates as follows:
\begin{equation*}
a^{k_{i}}(s-a)^{l_{i}}(b+c)^{m_{i}}:b^{k_{i}}(s-b)^{l_{i}}(a+c)^{m_{i}}:c^{k_{i}}(s-c)^{l_{i}}(a+b)^{m_{i}},%
\text{ }i=1,2,3,
\end{equation*}%
and let $t_i^1=a^{k_{i}}(s-a)^{l_{i}}(b+c)^{m_{i}}$, $%
t_i^2=b^{k_{i}}(s-b)^{l_{i}}(a+c)^{m_{i}},t_i^3=c^{k_{i}}(s-c)^{l_{i}}(a+b)^{m_{i}}$.
Now, consider the numbers
\begin{equation*}
\alpha _{ij}=\frac{t_{j}^{1}}{t_{j}^{1}+t_{j}^{2}+t_{j}^{3}}-\frac{t_{i}^{1}%
}{t_{i}^{1}+t_{i}^{2}+t_{i}^{3}},
\end{equation*}%
and%
\begin{equation*}
\beta _{ij}=\frac{t_{j}^{2}}{t_{j}^{1}+t_{j}^{2}+t_{j}^{3}}-\frac{t_{i}^{2}}{%
t_{i}^{1}+t_{i}^{2}+t_{i}^{3}},
\end{equation*}%
and%
\begin{equation*}
\gamma _{ij}=\frac{t_{j}^{3}}{t_{j}^{1}+t_{j}^{2}+t_{j}^{3}}-\frac{t_{i}^{3}%
}{t_{i}^{1}+t_{i}^{2}+t_{i}^{3}},
\end{equation*}%
for all $i,j\in \{1,2,3\}.$ Applying the relation (5) we obtain%
\begin{equation*}
I_{i}I_{j}^{2}=-\alpha _{ij}\cdot \beta _{ij}\cdot \gamma _{ij}\cdot
\left( \frac{a^{2}}{\alpha _{ij}}+\frac{b^{2}}{\beta
_{ij}}+\frac{c^{2}}{\gamma _{ij}}\right),
\end{equation*}%
for all $\ i,j\in \{1,2,3\}.$ Using the Cosine Law in triangle $%
I_{1}I_{2}I_{3}$ it follows%
\begin{equation*}
\cos \widehat{I_{1}I_{2}I_{3}}=\frac{%
I_{1}I_{2}^{2}+I_{2}I_{3}^{2}-I_{3}I_{1}^{2}}{2I_{1}I_{2}\cdot
I_{2}I_{3}}=
\end{equation*}%
\begin{equation}
\frac{-a^{2}(\beta _{12}\gamma _{12}+\beta _{23}\gamma _{23}-\beta
_{31}\gamma _{31})-b^{2}(\gamma _{12}\alpha _{12}+\gamma _{23}\alpha
_{23}-\gamma _{31}\alpha _{31})+c^{2}(\alpha _{12}\beta _{12}+\alpha
_{23}\beta _{23}-\alpha _{31}\beta _{31})}{2\sqrt{-\beta _{12}\gamma
_{12}a^{2}-\gamma _{12}\alpha _{12}b^{2}-\alpha _{12}\beta
_{12}c^{2}}\cdot \sqrt{-\beta _{23}\gamma _{23}a^{2}-\gamma
_{23}\alpha _{23}b^{2}-\alpha _{23}\beta _{23}c^{2}}} \tag{33}
\end{equation}

\begin{theorem}
The following inequalities hold%
\begin{equation*}
-2\sqrt{-\beta _{12}\gamma _{12}a^{2}-\gamma _{12}\alpha
_{12}b^{2}-\alpha _{12}\beta _{12}c^{2}}\cdot \sqrt{-\beta
_{23}\gamma _{23}a^{2}-\gamma _{23}\alpha _{23}b^{2}-\alpha
_{23}\beta _{23}c^{2}}\leq
\end{equation*}%
\begin{equation*}
-a^{2}(\beta _{12}\gamma _{12}+\beta _{23}\gamma _{23}-\beta
_{31}\gamma _{31})-b^{2}(\gamma _{12}\alpha _{12}+\gamma _{23}\alpha
_{23}-\gamma _{31}\alpha _{31})+c^{2}(\alpha _{12}\beta _{12}+\alpha
_{23}\beta _{23}-\alpha _{31}\beta _{31})\leq
\end{equation*}%
\begin{equation}
2\sqrt{-\beta _{12}\gamma _{12}a^{2}-\gamma _{12}\alpha
_{12}b^{2}-\alpha _{12}\beta _{12}c^{2}}\cdot \sqrt{-\beta
_{23}\gamma _{23}a^{2}-\gamma _{23}\alpha _{23}b^{2}-\alpha
_{23}\beta _{23}c^{2}}  \tag{34}
\end{equation}
\end{theorem}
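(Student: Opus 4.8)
The plan is to deduce (34) from the already-established formula (33) together with the elementary bound $-1\le\cos\widehat{I_1I_2I_3}\le 1$, in exactly the same way that Theorem 3.2 was deduced from Theorem 3.1. The only thing that requires a word of care is that the quantity sitting in the denominator of (33) is a genuine strictly positive real number, so that multiplying the chain of inequalities by it does not reverse anything.

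First I would recall how (33) arises: applying Theorem 2.3 in the form of relation (5) to each pair $I_i,I_j$ of the three Cevian points gives
\begin{equation*}
I_iI_j^{2}=-\alpha_{ij}\beta_{ij}\gamma_{ij}\left(\frac{a^{2}}{\alpha_{ij}}+\frac{b^{2}}{\beta_{ij}}+\frac{c^{2}}{\gamma_{ij}}\right)=-\bigl(\beta_{ij}\gamma_{ij}a^{2}+\gamma_{ij}\alpha_{ij}b^{2}+\alpha_{ij}\beta_{ij}c^{2}\bigr),
\end{equation*}
so the two square-root factors in (33) are precisely the side lengths $I_1I_2$ and $I_2I_3$; being square roots of squared distances they are nonnegative, and since $I_1,I_2,I_3$ are (pairwise distinct) Cevian points we have $I_1I_2>0$ and $I_2I_3>0$, whence the denominator $2\,I_1I_2\cdot I_2I_3$ is strictly positive. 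By the same distance formula the numerator of (33) is nothing but $I_1I_2^{2}+I_2I_3^{2}-I_3I_1^{2}$.

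Then I would simply multiply the inequality $-1\le\cos\widehat{I_1I_2I_3}\le 1$ through by the positive quantity $2\,I_1I_2\cdot I_2I_3$. Since, by (33), $\cos\widehat{I_1I_2I_3}$ equals the displayed ratio, the outcome is exactly the two-sided estimate (34). For completeness I would append the equality analysis in the style of Theorem 3.2: the right-hand equality holds if and only if $\widehat{I_1I_2I_3}=0$, i.e.\ $I_1,I_2,I_3$ are collinear with $I_2$ off the segment $I_1I_3$, while the left-hand equality holds if and only if $\widehat{I_1I_2I_3}=\pi$, i.e.\ $I_2$ lies between $I_1$ and $I_3$.

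I do not foresee any real obstacle: all the substantive content is already packaged in Theorem 2.3 and the Law of Cosines, both of which were used to produce (33). The one point to keep in mind is the standing non-degeneracy hypothesis that the three Cevian points be pairwise distinct, so that the denominator does not vanish; if one wished to be scrupulous one could observe that, for a non-isosceles triangle, $I_1=I_2$ (respectively $I_2=I_3$) forces the exponent triples $(k_1,l_1,m_1)$ and $(k_2,l_2,m_2)$ (respectively $(k_2,l_2,m_2)$ and $(k_3,l_3,m_3)$) to coincide, so this is precisely the natural genericity assumption on the ranks.
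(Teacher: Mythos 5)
Your proposal is correct and follows essentially the same route as the paper, whose entire proof is the one-line observation that (34) follows directly from $-1\leq \cos \widehat{I_{1}I_{2}I_{3}}\leq 1$ applied to formula (33). The extra care you take (identifying the square-root factors as the positive distances $I_1I_2$, $I_2I_3$ before clearing the denominator, and the equality analysis) only elaborates what the paper leaves implicit.
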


\begin{proof}
The inequalities (34) are simple direct consequences of the inequalities $%
-1\leq \cos \widehat{I_{1}I_{2}I_{3}}\leq 1.$
\end{proof}

\bigskip

\noindent $\begin{array}{l} \mbox{\it Dorin Andrica}\\
\mbox{"Babe\c{s}-Bolyai" University}\\
\mbox{Faculty of Mathematics and Computer Science}\\
\mbox{Cluj-Napoca, Romania}\\
\mbox{e-mail: dandrica@math.ubbcluj.ro }\\
 and\\
\mbox{ King Saud University}\\
\mbox{College of Science}\\
\mbox{Department of Mathematics}\\
\mbox{Ryiadh, Saudi Arabia}\\
\mbox{e-mail:dandrica@ksu.edu.sa}
\end{array}$

\bigskip

\noindent $\begin{array}{l}\mbox{\it Catalin Barbu}\\
\mbox{"Vasile Alecsandri" National College}\\
\mbox{600011 Bacau, Romania}\\
\mbox{e-mail:kafka{\_}mate@yahoo.com}
\end{array}$

\bigskip

\noindent $\begin{array}{l}\mbox{\it Nicusor Minculete}\\
\mbox{Department of REI}\\
\mbox{Dimitrie Cantemir-University}\\
\mbox{Str. Bisericii Romane, nr.107, Brasov, Romania}\\
\mbox{e-mail:minculeten@yahoo.com}
\end{array}$
\end{document}